\def\draft{n}
\theoremstyle{plain}
\newtheorem{theorem}{Theorem}
\newtheorem{proposition}{Proposition}[section]
\newtheorem{lemma}[proposition]{Lemma}
\newtheorem{corollary}[proposition]{Corollary}
\theoremstyle{definition}
\newtheorem{definition}[proposition]{Definition}
\theoremstyle{remark}
\newtheorem{example}[proposition]{Example}
\newtheorem{remark}[proposition]{Remark}
\def\printname#1{
        \if\draft y
                \smash{\makebox[0pt]{\hspace{-0.5in}
                        \raisebox{8pt}{\tt\tiny #1}}}
        \fi
}
\newcommand{\psdraw}[2]
         {\begin{array}{c} \hspace{-1.3mm}
        \raisebox{-4pt}{\epsfig{figure=draws/#1.eps,width=#2}}
        \hspace{-1.9mm}\end{array}}
\newlength{\standardunitlength}
\long\def\@makecaption#1#2{%
     \vskip 10pt

\setbox\@tempboxa\hbox{
       \small\sf{\bfcaptionfont #1. }\ignorespaces #2}%
     \ifdim \wd\@tempboxa >\captionwidth {%
         \rightskip=\@captionmargin\leftskip=\@captionmargin
         \unhbox\@tempboxa\par}%
       \else
         \hbox to\hsize{\hfil\box\@tempboxa\hfil}%
     \fi}
\font\bfcaptionfont=cmssbx10 scaled \magstephalf
\newdimen\@captionmargin\@captionmargin=2\parindent
\newdimen\captionwidth\captionwidth=\hsize
\def\lbl#1{\label{#1}\printname{#1}}
\def\BN{\mathbb N}
\def\BZ{\mathbb Z}
\def\BQ{\mathbb Q}
\def\BC{\mathbb C}
\def\calC{\mathcal C}
\def\D{\Delta}
\def\calH{\mathcal H}
\def\a{\alpha}
\def\La{\Lambda}
\def\l{\lambda}
\def\Ga{\Gamma}
\def\ga{\gamma}
\def\e{\epsilon}
\def\Ga{\Gamma}
\def\d{\delta}
\def\b{\beta}
\def\Om{\Omega}
\def\om{\omega}
\def\om{\omega}
\def\Om{\Omega}
\def\ep{\epsilon}
\def\ft{\mathfrak{t}}
\def\supp{\mathrm{supp}}
\begin{document}


\title[What is a sequence of Nilsson type?]{
What is a sequence of Nilsson type?}
\author{Stavros Garoufalidis}
\address{School of Mathematics \\
         Georgia Institute of Technology \\
         Atlanta, GA 30332-0160, USA \\ 
         {\tt http://www.math.gatech} \newline {\tt .edu/$\sim$stavros } }

\thanks{The author was supported in part by NSF. \\
\newline
1991 {\em Mathematics Classification.} Primary 57N10. Secondary 57M25.
\newline
{\em Key words and phrases: asymptotics, sequence, Nilsson type,
holonomic sequence, $G$-function, quasi-unipotent monodromy.
}
}

\date{September 20, 2010}


\begin{abstract}
Sequences of Nilsson type appear in abundance in 
Algebraic Geometry, Enumerative Combinatorics, Mathematical Physics
and Quantum Topology. We give an 
elementary introduction on this subject, including the definition
of sequences of Nilsson type and the uniqueness, existence, and effective
computation of their asymptotic expansion.
\end{abstract}

\maketitle

\tableofcontents


\section{Sequences of Nilsson type: definition}
\lbl{sub.asexp}

Sequences of Nilsson type are the ones that are asymptotic to power series
in powers of $1/n$ and $\log n$. They appear in abundance 
Analysis (in asymptotic expansions of integrals), in Mathematical Physics
and in Algebraic Geometry (in relation to the Gauss-Manin connection);
see for example \cite{An2,M1,M2,Ph,Sa}. They also appear in Enumerative 
Combinatorics (see \cite{FS,WZ2,Ga2}) and in Quantum Topology.
For instance, 
the Witten-Reshetikhin-Turaev invariant of a closed 3-manifold is a sequence
of complex numbers that depends on the level, and it is expected to be 
of Nilsson type; see \cite{Wi,FG,Ga3,Ro,LR,AH}. 
In addition, the Kashaev invariant
of a knot is expected to be a sequence of Nilsson type; see \cite{KT,AH,CG}.
The quantum spin network evaluation at a fixed root of unity is known
to be a sequence of Nilsson type; see \cite{GV1,GV2}.
For a general discussion of perturbative and non-perturbative invariants
of knotted objects that are expected to be sequences of Nilsson type, 
see \cite{Ga1}.

There is a close connection between sequences
of Nilsson type and multivalued analytic functions with quasi-unipotent
monodromy; see for example Theorem \ref{thm.exists} below.

Several people familiar with the ideas of Quantum Topology have asked
for a self-contained definition of sequences of Nilsson type
and their asymptotics, its uniqueness, existence and effective computation.

Asymptotics of sequences is a well-studied subject of
analysis that goes back at least to Poincare; see for example \cite{O,C,M1}. 
Since we could not find a reference for sequences of Nilsson type
in the existing literature, we decided to write this introductory article.
It concerns the asymptotic
expansion of sequences which are relevant in Quantum Topology, and may
serve as an elementary introduction to asymptotics. We claim no original
results in this survey paper. 

In order to define sequences of Nilsson type, we need to introduce
Nilsson monomials $h_{\om}(n)$ indexed by a well-ordered set $\Om$, and
a finite set $\La$ of complex numbers of equal magnitude.

For a natural number $d \in \BN$, a finite subset $S$ of the rational 
numbers consider the well-ordered set $\Om=(S+\BN) \times \{0,1,\dots,d\}$ 
indexed by $(\a,\b) < (\a',\b')$ if and only if $\a <\a'$ or $\a=\a'$
and $\b'<\b$. $\Om$ has the order type of the natural numbers. In particular,
for every $\om \in \Om$, the set of elements strictly smaller than $\om$
is finite. 
Consider the $\Om$-indexed family of monomials of {\em Nilsson type}
given by:
\begin{equation}
\lbl{eq.hab}
h_{\om}(n)=\frac{(\log n)^{\b}}{n^{\a}}
\end{equation}
for $\om=(\a,\b) \in \Om$. It is easy to see that 
$\lim_n h_{\om'}(n)/h_{\om}(n)=0$ (abbreviated by $h_{\om'}(n)/h_{\om}(n)=o(1)$,
and also by $h_{\om}(n) \gg h_{\om'}(n)$) 
if and only if $\om < \om'$. This and all limits below are taken when
$n$ goes to infinity.

Fix a finite set $\La$ of nonzero complex
numbers of magnitude $r>0$. Let $c_{\om,\l}$ be a collection of complex numbers 
indexed by $\Om \times \La$.

\begin{definition}
\lbl{def.asexp} 
\rm{(a)}
With the above notation, for a complex-valued sequence $(a_n)$ the expression
\begin{equation}
\lbl{eq.asexp}
a_n \sim \sum_{\om \in \Om} h_{\om}(n) \sum_{\l \in \La} c_{\om,\l} \l^n
\end{equation}
means that 
\begin{itemize}
\item
for every $\om \in \Om$ we have:
\begin{equation}
\lbl{eq.asexp1}
\left( a_n r^{-n} -\sum_{\om' \leq \om} h_{\om'}(n) 
\sum_{\l \in \La} c_{\om',\l} (\l r^{-1})^n \right)
\frac{1}{h_{\om}(n)}=o(1) 
\end{equation}
\item
$c_{\om,\l} \neq 0$ for some $(\om,\l) \in \Om \times \La$.
\end{itemize}
\rm{(b)} We say that a sequence $(a_n)$ is of Nilsson type if there
exist $\Om, \La$ and $c_{\om,\l}$ such that \eqref{eq.asexp} holds.
\end{definition}

We will say that an asymptotic expansion \eqref{eq.asexp}
is $\Om\times\La$-{\em minimal} if 
\begin{itemize}
\item
For every $\l \in \La$ there
exists $\om \in \Om$ such that $c_{\om,\l} \neq 0$. 
\item
For every $\om \in \Om$ there exists $\l \in \La$ such that $c_{\om,\l}
\neq 0$.
\end{itemize}
By considering a subset of $\La$ or $\Om$ if necessary, it is easy to
see that every asymptotic expansion has a minimal representative.

\section{Uniqueness}
\lbl{sub.unique}

Our first task is to show that a 
sequence of Nilsson type uniquely determines $\Om$, $\La$ and the 
coefficients $c_{\om,\l}$. The key idea is the following elementary lemma.

\begin{lemma}
\lbl{lem.1}
If $\La$ is a finite subset of the unit circle and 
\begin{equation}
\lbl{eq.lambdas}
\sum_{\l \in \La} c_{\l} \l^n=o(1)
\end{equation}
holds for some complex numbers $c_{\l}$, 
then $c_{\l}=0$ for all $\l \in \La$.
\end{lemma}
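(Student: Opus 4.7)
The plan is to isolate each coefficient $c_\mu$ by a Ces\`aro averaging argument. Fix an arbitrary $\mu \in \La$. Multiplying the hypothesis \eqref{eq.lambdas} by the unimodular sequence $\bar\mu^n$ preserves the $o(1)$ bound, so
\[ \sum_{\l \in \La} c_\l (\l\bar\mu)^n = o(1). \]

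Next, I would average both sides over $n = 1,\dots,N$ and let $N \to \infty$. The right-hand side tends to $0$ by the elementary fact that Ces\`aro means of null sequences are null. On the left, interchanging the finite sum over $\l$ with the average, the $\l = \mu$ term contributes exactly $c_\mu$ since $\l\bar\mu = 1$. For each $\l \neq \mu$, set $\z = \l\bar\mu$; then $|\z| = 1$ and $\z \neq 1$, so the geometric sum yields
\[ \left| \frac{1}{N}\sum_{n=1}^N \z^n \right| = \frac{1}{N} \left| \frac{\z(1-\z^N)}{1-\z} \right| \leq \frac{2}{N|1-\z|} \longrightarrow 0. \]

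Passing to the limit $N \to \infty$ forces $c_\mu = 0$, and since $\mu \in \La$ was arbitrary, all coefficients vanish.

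The only technical ingredient worth flagging is that Ces\`aro averages of $o(1)$ sequences are themselves $o(1)$; this follows by the standard $\varepsilon$-split (choose $n_0$ so that $|a_n|<\varepsilon$ for $n > n_0$, then take $N$ large enough that the contribution of the finitely many terms $n \leq n_0$ is negligible). I do not anticipate any real obstacle, since the lemma is essentially a quantitative restatement of the linear independence of the characters $n \mapsto \l^n$ on $\BZ$; an equally clean alternative would be induction on $|\La|$, applying the shift $a_{n+1} - \mu\, a_n$ to eliminate one frequency at a time.
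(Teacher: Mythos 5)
Your proof is correct and follows essentially the same route as the paper: multiplying by $\bar\mu^n$ is the same as dividing by $\mu^n$ on the unit circle, and the Ces\`aro averaging plus the geometric-sum bound is exactly the paper's argument (you are merely more explicit about the fact that Ces\`aro means of null sequences are null). No issues.
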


\begin{proof}
Divide \eqref{eq.lambdas} by $\l_1^n$ for some $\l_1 \in \Lambda$. 
Then we have
$ c_{\l_1} + \sum_{\l \neq \l_1} c_{\l} (\l/\l_1)^n=o(1)$, where $\l/\l_1 \neq 1$.
So,
$$
\frac{1}{n} \sum_{k=1}^n
\left(c_{\l_1} + \sum_{\l \neq \l_1} c_{\l} (\l/\l_1)^k\right)=o(1).
$$
By averaging, it follows that
$$
c_{\l_1} +  \frac{1}{n} \sum_{\l \neq \l_1} c_{\l} 
\frac{1- (\l/\l_1)^{n+1}}{1-\l/\l_1}=o(1).
$$
Thus, $c_{\l_1}=0$. Since $\l_1$ was an arbitrary element of $\La$, the
result follows.
\end{proof}

\begin{lemma}
\lbl{lem.2}
If $(a_n)$ satisfies \eqref{eq.asexp} then
\begin{equation}
\lbl{eq.r}
\limsup_n |a_n|^{1/n}=r.
\end{equation}
\end{lemma}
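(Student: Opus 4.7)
The plan is to use the minimality of the well-ordering on $\Om$ together with Lemma \ref{lem.1}. Let $\om_0$ be the smallest element of $\Om$ for which $c_{\om_0, \l} \neq 0$ for some $\l \in \La$; such an $\om_0$ exists because $\Om$ is well-ordered and Definition \ref{def.asexp} guarantees that not all coefficients vanish. For $\om' < \om_0$ every coefficient $c_{\om', \l}$ is zero, so applying \eqref{eq.asexp1} at $\om = \om_0$ collapses the finite sum to a single effective term and gives
\begin{equation*}
\frac{a_n r^{-n}}{h_{\om_0}(n)} \;=\; \sum_{\l \in \La} c_{\om_0,\l}\,(\l/r)^n \;+\; o(1).
\end{equation*}
This identity is the engine of both bounds.

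For the upper bound $\limsup_n |a_n|^{1/n} \leq r$, I would note that the right-hand side is bounded in absolute value by $\sum_\l |c_{\om_0,\l}| + 1$ for large $n$ (since $|\l/r| = 1$ and $\La$ is finite), hence $|a_n| \leq M\, r^n\, h_{\om_0}(n)$ eventually. Since $h_{\om_0}(n) = (\log n)^{\b}/n^{\a}$ is of sub-exponential growth, $h_{\om_0}(n)^{1/n} \to 1$, and taking $n$th roots yields the upper bound.

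For the lower bound $\limsup_n |a_n|^{1/n} \geq r$, I would argue by contradiction: if the $\limsup$ equals some $\rho < r$, then $|a_n r^{-n}|$ decays exponentially fast, while $1/h_{\om_0}(n)$ grows only polynomially (up to log factors), so $a_n r^{-n}/h_{\om_0}(n) \to 0$. The displayed identity above then forces
\begin{equation*}
\sum_{\l \in \La} c_{\om_0,\l}\,(\l/r)^n \;=\; o(1),
\end{equation*}
and since $\{\l/r : \l \in \La\}$ is a finite subset of the unit circle, Lemma \ref{lem.1} forces $c_{\om_0,\l} = 0$ for every $\l \in \La$, contradicting the choice of $\om_0$.

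I do not expect a real obstacle here; the main thing to be careful about is justifying that the tail contributions from $\om' < \om_0$ actually vanish in \eqref{eq.asexp1}, and that both $h_{\om_0}(n)^{1/n} \to 1$ and $1/h_{\om_0}(n)$ is sub-exponential regardless of the sign of $\a$ (so that polynomial growth cannot absorb an exponential decay in the contradiction step). Everything else is bookkeeping.
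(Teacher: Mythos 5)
Your proposal is correct and follows essentially the same route as the paper: isolate the smallest $\om_0$ with a nonzero coefficient, read off the identity $a_n r^{-n}/h_{\om_0}(n)=\sum_{\l}c_{\om_0,\l}(\l/r)^n+o(1)$, get the upper bound from boundedness of the exponential sum and $h_{\om_0}(n)^{1/n}\to 1$, and get the lower bound from Lemma \ref{lem.1}. The only cosmetic difference is that you run the lower bound as a contradiction, whereas the paper argues directly by extracting a subsequence on which the exponential sum has a nonzero limit; both hinge on the same application of Lemma \ref{lem.1}.
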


\begin{proof}
Since $c_{\om,\l} \neq 0$ for some $(\om,\l) \in \Om \times \La$, 
without loss of generality assume that $c_{\om_0,\l} \neq 0$ for some 
$\l \in \La$ where $\om_0$ is the smallest element of $\Om$.
Equation \eqref{eq.asexp} for $\om=\om_0$ gives that
\begin{equation}
\lbl{eq.asr}
\frac{a_n r^{-n}}{h_{\om_0}(n)}  -\sum_{\l \in \La} c_{\om_0,\l} (\l r^{-1})^n
=o(1)
\end{equation}
Now $\l r^{-1}$ are on the unit circle. It follows that
$$
\left|\frac{a_n r^{-n}}{h_{\om_0}(n)}\right| < C
$$
for some $C>0$. Since $\lim_n h_{\om}(n)^{1/n}=1$ for all $\om \in \Om$,
it follows that
$$
\limsup_n |a_n|^{1/n} \leq r
$$
Since some $c_{\om_0,\l}$ is nonzero and $\l r^{-1}$ are on the unit circle,
Lemma \ref{lem.1} implies that $\lim_n \sum_{\l \in \La} c_{\om_0,\l} (\l r^{-1})^n
\neq 0$. Since the sequence is bounded, it follows that 
there exists a subsequence $n_k$ such that 
$$
\lim_{n_k} 
\sum_{\l \in \La} c_{\om_0,\l} (\l r^{-1})^{n_k} =C' \neq 0
$$
Combined with Equation \eqref{eq.asr}, it follows that
$$
\lim_{n_k} |a_{n_k}|^{1/{n_k}}= r
$$
The result follows.
\end{proof}
In particular, Lemma \ref{lem.2} implies 
that sequences of Nilsson type satisfy $\limsup_n |a_n|^{1/n}>0$.

\begin{proposition}
\lbl{prop.unique}
Suppose that 
\begin{equation}
\lbl{eq.asexpa}
a_n \sim \sum_{\om \in \Om} h_{\om}(n) \sum_{\l \in \La} c_{\om,\l} \l^n
\end{equation}
and
\begin{equation}
\lbl{eq.asexpb}
a_n \sim \sum_{\om' \in \Om'} h_{\om'}(n) \sum_{\l' \in \La'} c'_{\om',\l'} \l'^n
\end{equation}
are $\Om\times\La$-minimal and $\Om'\times\La'$-minimal asymptotic expansions.
Then $\Om=\Om'$, $\La=\La'$. Moreover, for all $(\om,\l) \in \Om \times \La$
we have $c_{\om,\l}=c'_{\om,\l}$.
\end{proposition}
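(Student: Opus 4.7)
The plan is to reduce uniqueness to an inductive application of Lemma \ref{lem.1}, using Lemma \ref{lem.2} to first match the common radius.

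First I would apply Lemma \ref{lem.2} to both asymptotic expansions to conclude that the common magnitude $r$ of elements of $\La$ equals the common magnitude $r'$ of elements of $\La'$; so both expansions can be renormalized by the same factor $r^n$. Next I would pass to a common indexing set. Let $\Om''$ be any well-ordered set of order type $\BN$ containing both $\Om$ and $\Om'$ as subsets (for example, choose finite $S''\supset S\cup S'$ in $\BQ$ and $d''\geq \max(d,d')$, and set $\Om''=(S''+\BN)\times\{0,\dots,d''\}$), and let $\La''=\La\cup\La'$. Extend the coefficient arrays by zero, writing $c_{\om,\l}=0$ when $(\om,\l)\notin\Om\times\La$ and $c'_{\om,\l}=0$ when $(\om,\l)\notin\Om'\times\La'$. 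Then both \eqref{eq.asexpa} and \eqref{eq.asexpb} are asymptotic expansions indexed by $\Om''\times\La''$ in the sense of \eqref{eq.asexp1} (dropping the nonvanishing clause, which is irrelevant for uniqueness).

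I would then prove by induction on $\om\in\Om''$ that $c_{\om,\l}=c'_{\om,\l}$ for every $\l\in\La''$. Subtracting the two versions of \eqref{eq.asexp1} at a given $\om_1\in\Om''$ (and noting that the $a_n r^{-n}$ terms cancel), we obtain
\begin{equation*}
\frac{1}{h_{\om_1}(n)}\sum_{\om'\leq \om_1}h_{\om'}(n)\sum_{\l\in\La''}(c_{\om',\l}-c'_{\om',\l})(\l r^{-1})^n=o(1).
\end{equation*}
By the inductive hypothesis the contributions from $\om'<\om_1$ vanish, and what remains after cancellation and dividing by $h_{\om_1}(n)/h_{\om_1}(n)=1$ is
\begin{equation*}
\sum_{\l\in\La''}(c_{\om_1,\l}-c'_{\om_1,\l})(\l r^{-1})^n=o(1).
\end{equation*}
Since $\La''$ is finite and each $\l r^{-1}$ lies on the unit circle, Lemma \ref{lem.1} forces $c_{\om_1,\l}=c'_{\om_1,\l}$ for every $\l\in\La''$, completing the induction (valid since $\{\om'\in\Om'':\om'\leq\om_1\}$ is finite).

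Finally I would invoke minimality of the two original expansions. If some $\om\in\Om$ did not belong to $\Om'$, then $c'_{\om,\l}=0$ for all $\l$, forcing $c_{\om,\l}=0$ for all $\l$ by the equality just proved, contradicting the $\Om\times\La$-minimality clause that every $\om\in\Om$ admits some $\l\in\La$ with $c_{\om,\l}\neq 0$; hence $\Om=\Om'$, and symmetrically $\La=\La'$. The same equalities $c_{\om,\l}=c'_{\om,\l}$ established above then give the claim about coefficients. The only mildly subtle step is the inductive extraction, which hinges on the fact that $h_{\om'}(n)/h_{\om_1}(n)=o(1)$ for every $\om'>\om_1$ in $\Om''$; this is built into the ordering and is what allows us to isolate the $\om=\om_1$ slice cleanly before applying Lemma \ref{lem.1}.
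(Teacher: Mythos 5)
Your proof is correct and follows essentially the same route as the paper: Lemma \ref{lem.2} to identify $r=r'$, an induction over the well-ordered index set in which Lemma \ref{lem.1} kills the coefficient differences one slice at a time, and minimality invoked to conclude $\Om=\Om'$ and $\La=\La'$. The only organizational difference is that the paper compares the smallest elements $\om_0,\om'_0$ of the two index sets directly (using minimality already at that stage to rule out $\om_0<\om'_0$) and then inducts, whereas you first merge everything into a common pair $(\Om'',\La'')$ with coefficients extended by zero and defer minimality to the very end; this is a cleaner bookkeeping of the same argument. The one assertion you make without justification is that the zero-extended arrays still satisfy \eqref{eq.asexp1} at every $\om\in\Om''$, including those $\om$ that do not lie in the original $\Om$ (resp.\ $\Om'$); your induction does use the condition at such $\om$. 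The claim is true but deserves a line: since $\Om$ is infinite and every initial segment of $\Om''$ is finite, there is a least $\om_+\in\Om$ with $\om_+>\om$, and no element of $\Om$ lies strictly between $\om$ and $\om_+$; the defining condition at $\om_+$ then shows that the remainder $a_n r^{-n}-\sum_{\om'\in\Om,\,\om'\leq\om}h_{\om'}(n)\sum_{\l}c_{\om',\l}(\l r^{-1})^n$ is $O(h_{\om_+}(n))$, and $h_{\om_+}(n)/h_{\om}(n)=o(1)$ because $\om<\om_+$, so \eqref{eq.asexp1} holds at $\om$ for the extended expansion as well. With that line added, the argument is complete.
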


\begin{proof}
Let $\om_0$ and $\om'_0$ denote the smallest elements of $\Om$ and 
$\Om'$. Lemma \ref{lem.2} implies that $r=r'$ where $r$ and $r'$ are
the magnitudes of the elements of $\La$ and $\La'$ respectively.
Equation \eqref{eq.asexp1} for $\om_0$ and $\om'_0$ implies that
\begin{equation}
\lbl{eq.ar0}
\frac{a_n r^{-n}}{h_{\om_0}(n)}  -\sum_{\l \in \La} c_{\om_0,\l} (\l r^{-1})^n
=o(1),
\qquad
\frac{a_n r^{-n}}{h_{\om'_0}(n)}  -\sum_{\l' \in \La'} c'_{\om'_0,\l'} (\l' r^{-1})^n
=o(1)
\end{equation}
If $\om_0 \neq \om'_0$, we may assume that $\om_0 < \om'_0$. In that
case, observe that $h_{\om'_0}(n)/h_{\om_0}(n)=o(1)$. 
Multiply the second equation above by $h_{\om'_0}(n)/h_{\om_0}(n)$
and subtract from the first. It follows that
$$
-\sum_{\l \in \La} c_{\om_0,\l} (\l r^{-1})^n +
\frac{h_{\om'_0}(n)}{h_{\om_0}(n)} 
\sum_{\l' \in \La'} c'_{\om'_0,\l'} (\l' r^{-1})^n =o(1)
$$
Since $h_{\om'_0}(n)/h_{\om_0}(n)=o(1)$, it follows that
$$
\sum_{\l \in \La} c_{\om_0,\l} (\l r^{-1})^n=o(1)
$$
Lemma \ref{lem.1} implies that $c_{\om_0,\l}=0$ for all $\l$ contrary
to our minimality assumption of \eqref{eq.asexpa}. 
It follows that $\om_0=\om'_0$. Subtracting, 
Equation \eqref{eq.ar0} implies that
$$
-\sum_{\l \in \La} c_{\om_0,\l} (\l r^{-1})^n +
\sum_{\l' \in \La'} c'_{\om'_0,\l'} (\l' r^{-1})^n =o(1)
$$
Lemma \ref{lem.1} implies that if $c_{\om_0,\l} \neq 0$ for some $\l \in \La$,
then $\l \in \La'$ and moreover $c_{\om_0,\l}=c'_{\om_0,\l'}$.

An easy induction on $\om \in \Om$ proves the following statement.
For every $\om \in \Om$, the following holds. If $c_{\om,\l} \neq 0$ 
for some $\l \in \La$, then $\l \in \La'$ and $\om \in \Om'$
and $c_{\om,\l}=c'_{\om,\l}$.

The minimality assumption and the above statement implies that $\Om=\Om'$ 
and $\La=\La'$ and $c_{\l,\om}=c'_{\l,\om}$ for all $(\om,\l) \in \Om \times \La$.
\end{proof}

\begin{remark}
\lbl{rem.1}
Proposition \ref{prop.unique} proves uniqueness in a non-effective way. 
We will come back to the problem of computing $c_{\om,\l}$ later on.
\end{remark}

\section{Alternative expression for sequences of Nilsson type}
\lbl{sub.remarks}

If $(a_n)$ is a sequence of Nilsson type, we can write
\eqref{eq.asexp} in the form:
\begin{equation}
\lbl{eq.nilsson}
a_n \sim \sum_{\l,\a,\b} \l^{n} n^{\a} (\log n)^{\b} S_{\l,\a,\b}
g_{\l,\a,\b}(1/n)
\end{equation}
where 
\begin{itemize}
\item
the summation in \eqref{eq.nilsson} is over a finite set, 
\item
the {\em growth rates} $\l$ are complex numbers numbers of equal magnitude,
\item
the {\em exponents} $\a$ are rational numbers and the 
{\em nilpotency exponents} $\b$ are natural numbers,
\item
the {\em Stokes constants} $S_{\l,\a,\b}$ are complex numbers,
\item
$g_{\l,\a,\b}(x) \in \BC[[x]]$ are formal power series in $x$ with complex
coefficients and leading term $1$.
\end{itemize}

\begin{remark}
\lbl{rem.nilsson}
In the definition of a sequence of Nilsson type, we may 
additionally require that
\begin{itemize}
\item
$\La$ is a set of algebraic numbers,
\item
the formal power series $g_{\l,\a,\b}(x)$ is {\em Gevrey-1}, i.e., that
the coefficient of $x^k$ in $g_{\l,\a,\b}(x)$ is bounded by $C^k k!$
for all $k$, where $C$ depends on $g_{\l,\a,\b}$,
\item
the coefficients of the
formal power series $g_{\l,\a,\b}(x)$ lie in a fixed number field $K$, 
\end{itemize}
These additional requirements hold for the evaluations of classical
spin networks, see \cite{GV1}, as well as Sections 
\ref{sub.existence} and \ref{sub.compute} below.
\end{remark}

\begin{example}
\lbl{ex.1}
For example, if $d=1$ and $S=\{1,3/2\}$, then $\Om=(1+\BN) \cup(3/2+\BN)$
and we have:
$$
\frac{\log n}{n} \gg \frac{1}{n} \gg  
\frac{\log n}{n^{3/2}} \gg \frac{1}{n^{3/2}} \gg
\frac{\log n}{n^2} \gg \frac{1}{n^2} \gg \dots
$$
If $\La=\{\kappa,\mu,\nu\}$, the asymptotic expansion \eqref{eq.nilsson}
of a sequence of Nilsson type becomes:
\begin{eqnarray*}
a_n & \sim & \frac{\log n}{n} 
\sum_{\l \in \La} \l^n S_{\l,1} g_{\l,1}\left(\frac{1}{n}\right)
+
\frac{\log n}{n^{3/2}} 
\sum_{\l\in\La} \l_j^n S_{\l,2} g_{\l,2}\left(\frac{1}{n}\right)
\\
& & +\frac{1}{n} 
\sum_{\l\in\La} \l_j^n S_{\l,3} g_{\l,3}\left(\frac{1}{n}\right)
+
\frac{1}{n^{3/2}} 
\sum_{\l\in\La} \l_j^n S_{\l,4} g_{\l,4}\left(\frac{1}{n}\right)
\end{eqnarray*}
where $g_{\l,j}(x) \in \BC[[x]]$ are formal power series in $x$
and $S_{\l,j}$ are complex numbers.
\end{example}

\section{Existence}
\lbl{sub.existence}

In this section we will prove that a sequence is of Nilsson type, 
under some analytic continuation assumptions of its generating series.
This is a well-known argument (see for example, \cite{C,CG,FS,GM,GIKM,M1}) 
that consists of the following parts:
\begin{itemize}
\item
apply Cauchy's theorem to give an integral representation of
the sequence,
\item
deform the contour of integration to localize the computation near
the singularities of the generating function,
\item
analyse the local computation using the local monodromy assumption
of the generating function.
\end{itemize}
Let us give the details of the above existence proof. 
Since sequences of Nilsson type are exponentially bounded (as follows
from Lemma \ref{lem.2}), fix an
exponentially bounded sequence $(a_n)$ and consider its generating series
\begin{equation}
\lbl{eq.Gz}
G(z)=\sum_{n=0}^\infty a_n z^n
\end{equation}
$G(z)$ is an analytic function for all complex numbers $z$ that satisfy
$|z|<1/R$. 
Suppose now that $G$ has analytic continuation on a disk of radius 
$r$ with singularities at finitely many points $\kappa,\l,\mu,\nu,\dots$.
Suppose also that $G$ has further analytic continuation on a disk of
radius $r+\e$ minus finitely many segments emanating from the
singularities radially as in the following figure.
$$
\psdraw{radialdisk}{1.5in}
$$
Assume in addition that $G$ has {\em quasi-unipotent local monodromy} 
at each singularity $\l,\mu,\nu,\kappa$ on the circle of radius $r$
(i.e., the eigenvalues of the local monodromy are complex roots of unity).

\begin{theorem}
\lbl{thm.exists}
Under the above assumptions, the sequence $(a_n)$ is of Nilsson type.
\end{theorem}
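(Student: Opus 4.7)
The plan is to follow the three-step outline indicated by the author: represent $a_n$ as a Cauchy contour integral, push the contour past the singular circle so that the integral reduces to a sum of local keyhole integrals at each $\xi \in \{\kappa, \l, \mu, \nu, \dots\}$, and then evaluate each keyhole integral asymptotically via a Hankel contour, matching the result term by term with \eqref{eq.nilsson}.

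For the contour deformation, start from
\[
a_n = \frac{1}{2\pi i}\oint_{|z|=\rho} G(z)\, z^{-n-1}\, dz \qquad (\rho < 1/R)
\]
and deform the contour outward to $|z| = r+\e/2$, picking up a keyhole contour $\gamma_\xi$ at each singularity which runs inward along one side of the radial cut emanating from $\xi$, loops tightly around $\xi$, and returns along the other side. Since $G$ is continuous, hence bounded, on $\{|z|=r+\e/2\}$, the outer integral is $O\bigl((r+\e/2)^{-n}\bigr)$ and is negligible against any $r^{-n} h_\om(n)$; thus
\[
a_n = \sum_{\xi} I_\xi + O\!\bigl((r+\e/2)^{-n}\bigr), \qquad I_\xi := \frac{1}{2\pi i}\int_{\gamma_\xi} G(z)\, z^{-n-1}\, dz.
\]
For the local analysis, the quasi-unipotent hypothesis at $\xi$ (eigenvalues of local monodromy are roots of unity $e^{2\pi i \a_j}$ with $\a_j \in \BQ$, and Jordan blocks of size at most $d+1$) yields the classical Nilsson-type local decomposition
\[
G(z) = \sum_{\a \in S_\xi}\sum_{\b=0}^{d} \bigl(1-z/\xi\bigr)^{\a} \bigl(\log(1-z/\xi)\bigr)^{\b}\, g_{\xi,\a,\b}\!\bigl(1-z/\xi\bigr),
\]
with $S_\xi \subset \BQ$ finite and each $g_{\xi,\a,\b}$ holomorphic at $0$. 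The substitution $u = n(1-z/\xi)$ turns $\gamma_\xi$ into a (scaled) Hankel contour $H$ around $0$, and uses $z^{-n-1} = \xi^{-n-1}(1-u/n)^{-n-1} \sim \xi^{-n-1} e^{u}$ for bounded $u$. Expanding $(\log(u/n))^{\b} = \sum_k \binom{\b}{k}(\log u)^{k}(-\log n)^{\b-k}$ and Taylor-expanding $g_{\xi,\a,\b}(u/n)$, each term yields a contribution of the form
\[
\xi^{-n}\, n^{-\a-1}(\log n)^{\b'}\, S_{\xi,\a,\b,\b'}\, \widehat g_{\xi,\a,\b,\b'}(1/n),
\]
where the Stokes constants $S_{\xi,\a,\b,\b'}$ are built from the Hankel representation $\frac{1}{2\pi i}\int_H e^{t} t^{\a}\, dt = 1/\Gamma(-\a)$. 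Setting $\l = \xi^{-1}$ and collecting over $\xi$, $\a$, $\b'$ produces precisely the Nilsson form \eqref{eq.nilsson}.

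The main obstacle is not the formal manipulation but the uniform remainder estimates required to promote this formal expansion into an asymptotic expansion in the strict sense of Definition \ref{def.asexp}. For each $\om = (\a_0, \b_0) \in \Om$, one must truncate the local expansion of $G$ at a suitable order in $(1-z/\xi)$, apply the substitution $u = n(1-z/\xi)$ to the truncation error, and show that the resulting integral over $H$ contributes $o\bigl(r^{-n} h_\om(n)\bigr)$ uniformly in $n$. Two effects must be controlled simultaneously: the Taylor expansions of the $g_{\xi,\a,\b}$ converge only on a small disc about $\xi$, and the approximation $z^{-n-1} \approx \xi^{-n-1} e^{u}$ breaks down once $|u|$ becomes comparable to $n$. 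Standard saddle-point/Watson-type estimates split the Hankel contour into a central piece (where $|u|$ is of order $\log n$, say) and its tail; the central piece produces the stated asymptotic terms, while the tail is exponentially small. With these uniform bounds, \eqref{eq.asexp1} holds for every $\om \in \Om$ and $(a_n)$ is of Nilsson type.
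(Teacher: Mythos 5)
Your proposal is correct in outline and shares the paper's global architecture --- Cauchy's formula, deformation to keyhole/Hankel contours at the singularities plus outer arcs contributing $O((r+\e)^{-n})$, and a local analysis driven by the quasi-unipotent local expansion $\sum_{\a,\b}(1-z/\xi)^{\a}(\log(1-z/\xi))^{\b}g_{\xi,\a,\b}(1-z/\xi)$ --- but it executes the local step differently. The paper first passes to the \emph{variation} $\D_{\xi}G$, which collapses each Hankel integral to a real integral $\l^{-n}\int_0^{\e}\D_{\l}G(\l z)(1+z)^{-n-1}dz$ over the cut; it then evaluates term by term via the Beta integral $I_{\ga,\b}(n)=\frac{\Ga(\ga)\Ga(n+1-\ga)}{\Ga(n+1)}p_{\b}(\ga,n)$ and invokes the known Nilsson-type expansion of $\Ga(n+1-\ga)/\Ga(n+1)$ (or, alternatively, Watson's lemma after $z=e^t-1$). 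You instead rescale $u=n(1-z/\xi)$ and read off coefficients from the Hankel representation $\frac{1}{2\pi i}\int_H e^t t^{\a}dt=1/\Gamma(-\a)$, in the style of Flajolet--Odlyzko singularity analysis. Each route buys something: the paper's Beta-integral evaluation is exact for each local term (so the subtlety is only the interchange of summation and integration, delegated to Watson's lemma), and it makes the arithmetic nature of the Stokes constants visible as polynomials in $\psi^{(k)}$-values; but it requires the separate treatment of $\a\in\BZ\setminus\BN$ and the integrability caveat $\Re(\a)>-1$, patched by pre-integrating $G$. Your loop integral avoids the origin, so it is insensitive to the sign of $\Re(\a)$ and needs no such patch, at the price of the uniform remainder estimates you correctly identify (truncation of the $g_{\xi,\a,\b}$, validity of $(1-u/n)^{-n-1}\approx e^u$, splitting the contour into a central piece and an exponentially small tail); these are the standard transfer-theorem bounds and you have named all the points where they are needed. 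One remark applying equally to both arguments: deducing the convergent local expansion with holomorphic $g_{\xi,\a,\b}$ from quasi-unipotence alone tacitly assumes the singularities are regular (moderate growth), which the paper also takes for granted with its citation of Malgrange.
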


\begin{corollary}
\lbl{cor.exists}
Suppose that $G(z)=\sum_{n=0}^\infty a_n z^n$ is a multivalued analytic function
on $\BC\setminus\La$ (where $\La \subset \BC$ is a finite set) which
is regular at $z=0$, and has quasi-unipotent local monodromy.
 Then, $(a_n)$ is a sequence of Nilsson type.
\end{corollary}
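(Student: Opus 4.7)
The plan is to reduce the corollary directly to Theorem \ref{thm.exists} by extracting from the global multi-valued picture on $\BC \setminus \La$ the particular local geometric setup that the theorem requires. The proof is essentially bookkeeping.

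First I would set $r := \min_{\l \in \La} |\l|$ (assuming $\La \neq \emptyset$; otherwise $G$ is entire and the statement is vacuous), which is positive since $G$ is regular at $0$ and hence $0 \notin \La$. Let $\La_r = \{\l \in \La : |\l| = r\}$ be the finite set of singularities of smallest modulus. The open disk $|z| < r$ is simply connected and contained in $\BC \setminus \La$, so the branch of $G$ at $0$ extends analytically and single-valuedly over it; this extension coincides with the Taylor series $\sum a_n z^n$, whose radius of convergence is therefore at least $r$. Conversely, the radius cannot exceed $r$ because each $\l \in \La_r$ is a genuine singularity on the boundary circle, so the radius equals exactly $r$, matching the first piece of the hypothesis of Theorem \ref{thm.exists}.

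Next, since $\La$ is finite I would pick $\e > 0$ small enough that the closed disk $|z| \leq r + \e$ meets $\La$ precisely in $\La_r$. Then cut the open disk $|z| < r + \e$ along the finitely many radial segments running outward from each $\l \in \La_r$ to the boundary circle of the enlarged disk. The resulting domain $U$ is simply connected and sits inside $\BC \setminus \La$, so multi-valued analyticity of $G$ yields a unique single-valued branch on $U$ extending the Taylor-series branch at $0$. This realizes the geometric configuration of Theorem \ref{thm.exists}, and quasi-unipotent local monodromy at each point of $\La_r$ is part of the standing hypothesis of the corollary. Invoking Theorem \ref{thm.exists} then gives that $(a_n)$ is of Nilsson type. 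There is no serious obstacle here: the only things to watch are the choice of $\e$ (so no further singularities leak into the enlarged disk) and the observation that simple connectedness of $U$ automatically produces the required single-valued branch.
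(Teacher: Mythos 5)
Your reduction is correct and is exactly the (implicit) argument the paper intends: the corollary is stated without proof as an immediate consequence of Theorem \ref{thm.exists}, and your bookkeeping---taking $r=\min_{\l\in\La}|\l|$, choosing $\e$ so no further singularities enter the enlarged disk, and cutting radially to get a simply connected domain carrying a single-valued branch---is the standard way to verify its hypotheses. The only quibble is the parenthetical claim that the case $\La=\emptyset$ is ``vacuous'': an entire non-polynomial $G$ gives $\limsup|a_n|^{1/n}=0$, so the conclusion would actually be \emph{false} rather than vacuous there (the paper sidesteps this degenerate case, cf.\ Lemma \ref{lem.Gpoly}), but this does not affect the substance of your argument.
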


\begin{remark}
\lbl{rem.exists}
We know of at least three ways to show that a germ $G(z)$ of an analytic
function can be analytically continued to the complex plane,
namely 
\begin{itemize}
\item[(a)]
$G$ satisfies a linear differential equation, see for example 
\cite[Thm.1]{Ga2} reviewed in Section \ref{sub.examples} below.
For examples that come from Quantum Topology (specifically, spin networks)
see \cite{GV1,GV2}.
\item[(b)]
$G$ satisfies a nonlinear differential equation. See for example
the instanton solutions of Painlev\'e I studied in detail in 
\cite{GIKM} and the matrix models of \cite{GM}.
\item[(c)]
$G$ is resurgent. See for example the Kontsevich-Zagier series studied
in detail in \cite{CG}, and more generally the arithmetic resurgence
conjecture of \cite{Ga1} for sequences that appear in Quantum Topology.
\end{itemize}
\end{remark}

\begin{proof}(of Theorem \ref{thm.exists})
We begin by  applying Cauchy's theorem to give an integral
representation of $(a_n)$. If $\ga$ is a circle of radius 
less than $1/R$ that contains the origin, then we have:

\begin{equation}
\lbl{eq.intan}
a_n=\frac{1}{2 \pi i} \int_{\ga} \frac{G(z)}{z^{n+1}}dz
\end{equation}
We can deform $\ga$ to a contour $\calC$ which consists of 
a contour $\calH_{\l}$ around each singularity $\l$ and finitely many arcs
$\ga_{r+\e}$ of the circle of radius $1/(r+\e)$ as in the following figure.
\begin{equation}
\lbl{eq.hankel}
\psdraw{hankel}{2in}
\end{equation}
The contours $\calH_{\l}$ are known as {\em Hankel contours} in Analysis 
(see \cite{O})
and {\em Lefschetz thimbles} in Algebraic Geometry (see \cite{Ph,Sa}). 
Cauchy's theorem implies that
\begin{equation}
\lbl{eq.klm}
a_n= 
\frac{1}{2 \pi i} \sum_{\l \in \La}
\int_{\calH_{\l}} \frac{G(z)}{z^{n+1}}dz + 
\frac{1}{2 \pi i} 
\int_{\ga_{r+\e}} \frac{G(z)}{z^{n+1}}dz
\end{equation}
The above expression is exact, and decomposes the sequence
$(a_n)$ into a finite sum of sequences (one per singularity) and an extra 
term. Of course, there is nothing canonical about this decomposition, since 
the size of the Hankel contour and $\ga_{r+\e}$ depends on $\e$. One could
make the decomposition nearly canonical by using Hankel contours that 
extend to infinity, but even so there are choices of directions to be made, 
and we will not use them here.

The integral over $\ga_{r,\e}$ can be estimated by $O((r+\e)^{-n})$ since
$G$ is uniformly bounded on the arcs $\ga_{r,\e}$.
Since we assume that the local monodromy of
$G(z)$ around a singularity is {\em quasi-unipotent}, it follows 
(see \cite{M1}) that modulo germs of holomorphic functions at zero,
$G(\l+z)$ has a local expansion of the form
$$
G(\l+z)=
\sum_{\a',\b'} z^{\a'} (\log z)^{\b'} h_{\a',\b'}(z)
$$
where the summation is over a finite set, $\a'\in \BQ$,
$\b' \in \BN$ and $ h_{\a',\b'}(z)$ are germs 
of functions analytic at $z=0$. For a germ $f(z)$ of a {\em multi-valued} 
analytic function at $z=0$, let $\D_0 f$ denote its {\em variation}
defined by
$$
\D_0 f (z)= \lim_{\e\to 1} f(e^{2 \pi i \e}z)-
\lim_{\e\to 0} f(e^{2 \pi i \e}z)
$$
(see \cite{M1}) when $z$ is restricted on a line segment $[0,\ep)$.
The variation of the building blocks $z^{\a}$ and $(\log z)^{\b}$
are given by
$$
\D_0 (z^{\a})= 
\begin{cases} (e^{2 \pi i \a}-1) z^{\a} & \a \in \BQ\setminus\BZ \\
\d_0 & \a \in \BZ\setminus\BN \\
0 & \a \in \BN
\end{cases},
\qquad
\D_0 (\log z)=2 \pi i
$$
where $\d_0$ is the Dirac delta function (really, a distribution).
For a singularity $\l$ of $G(z)$, let $\D_{\l}G(z)$ denote the variation
of $G(\l+z)$. It follows that for $z$ in the line segment of Figure 
\eqref{eq.hankel}, we have
\begin{equation}
\lbl{eq.im3}
\D_{\l}G(z)= \sum_{\a,\b} z^{\a} (\log z)^{\b}
\sum_{k=0}^\infty c'_{\a,\b,\l,k} z^{k-1}
\end{equation}
where the sum is over a finite set $\{\a,\b\}$, $\a\in\BQ\setminus\BN$,
$\b\in\BN$ and $\sum_{k=0}^\infty \sum_{k=0}^\infty c'_{\a,\b,\l,k} z^k$ are germs
of analytic functions at $z=0$. When $\a \in \BZ\setminus\BN$, we can deform
the Hankel contour into a small circle centered around $\l$ and 
apply Cauchy's theorem. For the remaining cases $\a \in \BQ\setminus\BZ$,
a change of variables $z \mapsto \l(1+z)$ centers the Hankel contour at zero
and implies that
\begin{equation}
\lbl{eq.im2}
\int_{\calH_{\l}} \frac{G(z)}{z^{n+1}}dz =
\l^{-n} \int_{\calH_0} \frac{G(\l(1+z))}{(1+z)^{n+1}} d z=
\l^{-n} \int_0^{\e} \frac{\D_{\l}G(\l z)}{(1+z)^{n+1}} d z
\end{equation}
A {\em Beta-integral} calculation gives that
$$
\int_0^\infty \frac{z^{\ga-1}}{(1+z)^{n+1}} d z
=\frac{\Ga(\ga)\Ga(n+1-\ga)}{\Ga(n+1)}
$$
and therefore
$$
\int_0^{\ep} \frac{z^{\ga-1}}{(1+z)^{n+1}} d z
=\frac{\Ga(\ga)\Ga(n+1-\ga)}{\Ga(n+1)}(1+O((r+\ep)^{-n}))
$$
More generally, for a natural number $\b$ let us define
\begin{equation}
\lbl{eq.Ibc}
I_{\ga,\b}(n)=\int_0^\infty \frac{z^{\ga-1}}{(1+z)^{n+1}} (\log z)^{\b} d z
\end{equation}
Then, we have
\begin{equation}
\lbl{eq.im1}
I_{\ga,\b}(n)
=\frac{\Ga(\ga)\Ga(n+1-\ga)}{\Ga(n+1)} p_{\b}(\ga,n)
\end{equation}
where $p_{\b}(\ga,n)$ is a polynomial in the variables $\psi^{(k)}(n+1-\ga)$ 
and $\psi^{(l)}(\ga)$ with rational coefficients, where $\psi(z)=\Ga'(z)/\Ga(z)$
is the logarithmic derivative of the $\Ga$-function. For example,
we have:
\begin{eqnarray*}
p_0(n) &=& 1 \\
p_1(n) &=& -\psi(n+1-\ga)+\psi(\ga) \\
p_2(n) &=& \psi(n+1-\ga)^2 + \psi^{(1)}(n+1-\ga) -2 \psi(\ga) \psi(n+1-\ga)
+\psi(\ga)^2 + \psi^{(1)}(\ga)
\end{eqnarray*}
Compare also with \cite[Eqn.I.4.2]{M1} and \cite[Eqn.7.5]{M2}.
What is important is not the exact evaluation of $I_{\ga,\b}(n)$ given
in \eqref{eq.im1}, but the
fact that the sequence $I_{\ga,\b}(n)$ is of Nilsson type.
This follows from the fact that we have an asymptotic expansion (see \cite{O}):
\begin{equation}
\lbl{eq.gaexp}
\frac{\Ga(n+1-\ga)}{\Ga(n+1)} \sim \frac{1}{n^{\ga}}\left(1 
+ \frac{\ga^2-\ga}{2n} + \frac{ 3 \ga^4 - 2 \ga^3 - 3 \ga^2 +2 \ga}{24 n^2}
+ \dots \right)
\end{equation}
Alternatively, one may show that the sequence $I_{\ga,\b}(n)$ is of Nilsson type
by a change of variables $z=e^t-1$ which gives
$$
\int_0^\infty \frac{z^{\ga-1}}{(1+z)^{n+1}} (\log z)^{\b} d z=
\int_0^\infty e^{-nt} t^{\ga-1} A_{\ga,\b}(t) dt
$$
where
$$
A_{\ga,\b}(t) = \left(\frac{e^t-1}{t}\right)^{\ga-1}
\left(\log \left(\frac{e^t-1}{t}\right)-\log t\right)^{\b} dt
$$
is a function which can be expanded into a polynomial of $\log t$ with
coefficients functions which are analytic at $t=0$. Expand $A_{\ga,\b}(t)$ into
power series at $t=0$ and interchange summation and integration 
by applying Watson's lemma (see \cite{O}) to conclude that 
$I_{\ga,\b}(n)$ is of Nilsson type.

Replace $\D_{\l} G(\l z)$ by \eqref{eq.im3} in \eqref{eq.im2}
and interchange summation and integration by applying Watson's lemma 
(see \cite{O}). It follows that
$$
\frac{1}{2 \pi i} 
\int_{\calH_{\l}} \frac{G(z)}{z^{n+1}}dz 
\sim \l^{-n} \sum_{\a,\b} \frac{1}{n^{\a}} (\log n)^{\b} 
\sum_{k=0}^\infty c_{\a,\b,\l,k} \frac{1}{n^k} 
$$
Equation \eqref{eq.klm} cocnludes that $(a_n)$ is of Nilsson type.
Strictly speaking, the above analysis works only 
when $\Re(\a)>-1$. This is a local integrability assumption of the 
Beta-integral. The asymptotic expansion \eqref{eq.asexp} remains valid as 
stated even when $\Re(\a) \leq -1$ as follows by first integrating $G(z)$ 
a sufficient number of times, and then applying the analysis. 
This is exactly what was done in \cite[Sec.7]{CG}
at the cost of complicating the notation.
\end{proof}

\begin{remark}
\lbl{rem.stokes}
Since the sequence $(c'_{\a,\b,\l,k})$ as a function of $k$
is exponentially bounded and the asymptotic
expansion \eqref{eq.gaexp} is Gevrey-1, it follows that the sequence
$(c_{\a,\b,\l,k})$ is Gevrey-1. Moreover, if the sequence $(c'_{\a,\b,\l,k})$
lies in a number field $K$, then we can write the asymptotic expansion
of $(a_n)$ in the form \eqref{eq.nilsson} where $S_{\a,\b,\l}$ are polynomials
(with rational coefficients) of values of logarithmic derivatives of the
Gamma function at rational numbers.
\end{remark}

\section{$G$-functions}

\subsection{$G$-functions: examples of sequences of Nilsson type}
\lbl{sub.examples}

In \cite[Thm.1]{Ga2} it was proven that balanced multisum sequences
(which appear in abundance in Enumerative Combinatorics) are sequences
of Nilsson type. The proof uses the theory of $G$-functions which verifies
that the generating series of  balanced multisum sequences satisfies 
the hypothesis of Corollary \ref{cor.exists}. Let us give the definition
of a balanced multisum sequence, a $G$-function and an example.

\begin{definition}
\lbl{def.hyperg}
\rm{(a)}
A {\em term} $\ft_{n,k}$ in variables $(n,k)$ where $k=(k_1,\dots,k_r)$
is an expression of the form:
\begin{equation}
\lbl{eq.defterm}
\ft_{n,k}=C_0^n \prod_{i=1}^r C_i^{k_i} \prod_{j=1}^J A_j(n,k)!^{\e_j}
\end{equation}
where $C_i \in \overline{\BQ}$ for $i=0,\dots,r$, 
$\e_j=\pm 1$ for $j=1,\dots,J$, and $A_j$ are integral 
linear forms in the variables $(n,k)$ such that for every 
$n \in \BN$, the set 
\begin{equation}
\lbl{eq.kset}
\supp(\ft_{n,\bullet}):=
\{ k \in \BZ^r \, | \, A_j(n,k) \geq 0, \,\, j=1,\dots, J \}
\end{equation}
is finite. We will call a term {\em balanced} if in addition it
satisfies the {\em balance condition}:
\begin{equation}
\lbl{eq.Ajsum}
\sum_{j=1}^J \e_j A_j=0.
\end{equation}
\rm{(b)}
A {\em (balanced) multisum sequence} $(a_n)$ is a sequence of complex numbers
of the form
\begin{equation}
\lbl{eq.bc}
a_n=\sum_{k \in \supp(\ft_{n,\bullet})} \ft_{n,k}
\end{equation}
where $\ft$ is a (balanced) term and the sum is over a finite set that
depends on $\ft$.
\end{definition}

For example, the following sequence is a balanced multisum
\begin{equation}
\lbl{eq.apery}
a_n=\sum_{k,l} \binom{n}{k+l}^2 \binom{n+k}{k}^3 \binom{n+l}{l} =
\sum_{k,l} \frac{(n+k)!^3(n+l)!}{k!^3 l! n!^2 (k+l)!^2(n-k-l)!^2}
\end{equation}
where the summation is over the set of pairs of integers $(k,l)$
that satisfy $0 \leq k, l$ and $k+l \leq n$.

Let us now recall what us a $G$-function. The latter were 
introduced by Siegel in \cite{Si} with motivation being
arithmetic problems in elliptic integrals, and transcendence
problems in number theory. For further information about $G$-functions
and their properties, see \cite{An1,An2}.

\begin{definition}
\lbl{def.Gfunction}
We say that series $G(z)=\sum_{n=0}^\infty a_n z^n$ is a {\em $G$-function}
if 
\begin{itemize}
\item[(a)]
the coefficients $a_n$ are algebraic numbers,
\item[(b)]
there exists a constant $C>0$ so that for every $n \in \BN$
the absolute value of every conjugate of $a_n$ is less than or equal to 
$C^n$, 
\item[(c)]
the common denominator of $a_0,\dots, a_n$ is less than or equal 
to $C^n$,
\item[(d)]
$G(z)$ is holonomic, i.e., it satisfies a linear differential equation
with coefficients polynomials in $z$.
\end{itemize}
\end{definition}
$G$-functions satisfy the hypothesis of Corollary \ref{cor.exists};
see \cite{An1,An2}. Indeed, they satisfy a linear differential equation
which analytically continues them in the complex plane. Moreover, the
arithmetic hypothesis ensures that the local monodromy is quasi-unipotent.
We can now state the main result of \cite{Ga2}.

\begin{theorem}
\cite{Ga2}\lbl{thm.Ga2}
\rm{(a)} If $(a_n)$ is a balanced multisum sequence, its generating function
$G(z)=\sum_{n=0}^\infty a_n z^n$ is a $G$-function.
\newline
\rm{(b)} In that case, it follows that $(a_n)$ is a sequence of Nilsson
type.
\end{theorem}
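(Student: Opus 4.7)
The plan is to prove (a) by verifying the four defining properties of a $G$-function one by one, and to deduce (b) directly from Corollary~\ref{cor.exists} together with the deep structural properties of $G$-functions.

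For part (a), fix a balanced term $\ft_{n,k}$ as in \eqref{eq.defterm}. The algebraicity condition is immediate: for each $n$, the support $\supp(\ft_{n,\bullet})$ is finite, each factorial $A_j(n,k)!$ is a positive integer, the exponents $\e_j=\pm 1$ give rational factors, and the constants $C_i\in\overline{\BQ}$ produce algebraic contributions. Hence $a_n\in\overline{\BQ}$. The $C^n$ bound on conjugates follows because the only nonrational contributions come from $C_0^n\prod C_i^{k_i}$; since $|\supp(\ft_{n,\bullet})|$ grows polynomially in $n$ (the $A_j\ge 0$ inequalities confine $k$ to a polytope of size $O(n^r)$), and each $|k_i|\le c n$, any conjugate of $a_n$ is bounded by a polynomial times $\max_{\sigma,i}|\sigma(C_i)|^{O(n)}$, giving $C^n$ for some $C>0$.

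The denominator bound is where the balance condition \eqref{eq.Ajsum} is essential. Write
\[
\prod_{j=1}^J A_j(n,k)!^{\e_j} = \frac{\prod_{\e_j=+1}A_j(n,k)!}{\prod_{\e_j=-1}A_j(n,k)!}.
\]
Since $\sum_{\e_j=+1}A_j = \sum_{\e_j=-1}A_j$ by \eqref{eq.Ajsum}, this is a ratio of factorials of equal total degree; by a Landau/Chebotarev-type argument on $p$-adic valuations (estimating $v_p(m!)=\sum_{i\ge 1}\lfloor m/p^i\rfloor$ for each prime $p$ up to $Cn$), the common denominator of these ratios for $n'\le n$ is at most $C^n$. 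Combined with the denominators of the algebraic $C_i$, this yields condition (c). For holonomicity (condition (d)), I would invoke Wilf--Zeilberger/creative telescoping: a proper hypergeometric summand in $(n,k_1,\dots,k_r)$ admits, by Zeilberger's theorem (iterated on the multi-index), a telescoping certificate yielding a linear recurrence in $n$ with polynomial coefficients for $a_n$; this recurrence translates into a polynomial linear ODE for $G(z)$. Our term $\ft_{n,k}$ is proper hypergeometric because ratios $\ft_{n+1,k}/\ft_{n,k}$ and $\ft_{n,k+e_i}/\ft_{n,k}$ are rational in $(n,k)$, and the balance condition guarantees properness (no shifts of factorials introduce polynomial tails outside the required form).

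Part (b) is then a one-line corollary. Given (a), $G(z)$ is a $G$-function, and by the André--Chudnovsky--Katz theorem $G$-functions have only regular singularities with quasi-unipotent local monodromy, and admit analytic continuation on $\BC$ minus finitely many points. Thus $G(z)$ satisfies the hypotheses of Corollary~\ref{cor.exists}, which yields that $(a_n)$ is of Nilsson type.

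The main obstacle is the holonomicity step: the creative-telescoping argument for multi-index proper hypergeometric sums is nontrivial and requires verifying that the iterated WZ algorithm terminates in our balanced setting. The denominator estimate is also delicate and really does use \eqref{eq.Ajsum} in an essential way — without balance, the factorial ratio need not even be a rational number with controlled denominators. The quasi-unipotent monodromy input into (b) is a black box from $G$-function theory and requires no work here beyond citing \cite{An1,An2}.
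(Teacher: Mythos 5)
Your proposal is correct and follows essentially the same route as the paper, which does not prove Theorem \ref{thm.Ga2} itself but cites \cite{Ga2} and sketches exactly your outline: part (a) by verifying the four $G$-function axioms (algebraicity and size/denominator bounds from the balance condition, holonomicity from Wilf--Zeilberger), and part (b) by invoking the Andr\'e--Chudnovsky--Katz quasi-unipotence of $G$-function monodromy to apply Corollary \ref{cor.exists}. The only point you omit is the degenerate case $\limsup_n|a_n|^{1/n}=0$, which the paper disposes of separately via Lemma \ref{lem.Gpoly}.
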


The reader may have noticed that we defined the notion of a sequence
of Nilsson type only when $\limsup|a_n|^{1/n}>0$. In case the
generating series is a $G$-function, the remaining case
is taken care by the following lemma.

\begin{lemma}
\lbl{lem.Gpoly}
If $G(z)=\sum_{n=0}^\infty a_n z^n$ is a $G$-function and 
$\limsup|a_n|^{1/n}=0$,
then $a_n=0$ for all but finitely many $n$.
\end{lemma}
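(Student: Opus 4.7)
The plan is to run a Liouville-style argument using the arithmetic conditions (b) and (c) in the definition of a $G$-function. Under those conditions the coefficients $a_n$ lie in a fixed number field $K$ (this is implicit in the definition, since otherwise ``every conjugate'' has no meaning); let $d=[K:\BQ]$, and let $\s_1=\mathrm{id},\s_2,\dots,\s_d$ be the embeddings of $K$ into $\BC$.

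First I would use (c) to choose positive integers $D_n\leq C^n$ such that $D_n a_n$ is an algebraic integer in $K$. Then its norm
$$
N_{K/\BQ}(D_n a_n)=D_n^{\,d}\,\prod_{i=1}^{d}\s_i(a_n)
$$
is a rational integer. Bounding the factors using (b) for $i\geq 2$ and using the hypothesis $\limsup|a_n|^{1/n}=0$ for $i=1$, we get, for every $\e>0$ and all sufficiently large $n$,
$$
\bigl|N_{K/\BQ}(D_n a_n)\bigr|\;\leq\; C^{nd}\cdot\e^n\cdot C^{n(d-1)}\;=\;\bigl(\e\, C^{2d-1}\bigr)^{n}.
$$
Choosing $\e$ small enough that $\e\, C^{2d-1}<1$ forces $|N_{K/\BQ}(D_n a_n)|<1$ for all large $n$, and since the norm is a rational integer this means $N_{K/\BQ}(D_n a_n)=0$, hence $a_n=0$.

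The main (minor) obstacle is simply making the choice of number field and the conjugate bounds rigorous: one has to know at the outset that all $a_n$ lie in a common finite extension $K$ of $\BQ$ of bounded degree, so that ``the'' set of conjugates over which one takes the norm has bounded size $d$. Once that is in hand, the rest is a standard Liouville/Siegel estimate: the geometric smallness of $|a_n|$ competes with the polynomial (in $n$) growth of the exponents $dn$ and $(d-1)n$ coming from the denominator and conjugate bounds, and smallness wins provided $\e$ is chosen small enough.
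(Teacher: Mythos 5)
Your argument is correct, but it is genuinely different from the one in the paper. The paper's proof is function-theoretic and leans on a deep theorem: $\limsup|a_n|^{1/n}=0$ makes $G$ entire, and since $G$-functions are regular-singular everywhere including at infinity (the Andr\'e--Chudnovsky--Katz theorem, cited to \cite{An1,An2}), an entire $G$-function has polynomial growth and is therefore a polynomial. Your proof is instead a purely arithmetic Liouville/Siegel estimate: the norm $N_{K/\BQ}(D_n a_n)$ is a rational integer of absolute value at most $(\e\,C^{2d-1})^n<1$, hence zero, hence $a_n=0$. This buys elementarity --- you use only conditions (a)--(c) of Definition \ref{def.Gfunction} together with the fact that the $a_n$ lie in a fixed number field, and you avoid invoking any global regularity theorem --- at the cost of having to justify that fixed field. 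That justification is the one point you should not wave away: condition (b) as stated only says each $a_n$ is algebraic with conjugates bounded by $C^n$, and if the degrees $[\BQ(a_n):\BQ]$ were unbounded your exponent $2d-1$ would not be uniform and the estimate would collapse. The standard fix is exactly where condition (d) re-enters: a holonomic series with algebraic coefficients satisfies a linear recurrence over a number field $F$, so all $a_n$ lie in the field generated by $F$ and finitely many initial terms; alternatively one adopts the usual convention (Siegel, Andr\'e) that the coefficients of a $G$-function lie in a fixed number field by definition. With that point made explicit, your proof is complete and arguably more self-contained than the paper's.
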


\begin{proof}
The assumption implies that $G(z)$ is an entire $G$-function. Since
those are regular-singular at infinity, it follows that $G(z)$ is
a polynomial; see also \cite{An1,An2}. The result follows.
\end{proof}

\subsection{Classical spin networks: examples of $G$-functions}
\lbl{sub.spin}

In \cite{GV1,GV2} it was proven that the evaluation of a quantum 
spin network at a fixed root of unity is a balanced multisum sequence,
and consequently it is a sequence of Nilsson type.

\section{Effective computations}
\lbl{sec.compute}

\subsection{Exact computations}
\lbl{sub.compute}

Proposition \ref{prop.unique} is a uniqueness statement about the
asymptotics of a sequence of Nilsson type, and Theorem \ref{thm.exists}
is an existence statement which is not effective. There are two types
of effective computations, exact and numerical. The exact computations
use as an input a linear recursion relation of the sequence. 
The following proposition is elementary and is discussed in detail
for example in \cite{FS,WZ2}.

\begin{proposition}
\lbl{prop.exactc}
Given a linear recursion relation for a sequence $(a_n)$ of Nilsson
type, one can compute exactly $\l,\a,\b$
and the power series $g_{\a,b,\l}(x)$ that appear in Equation 
\eqref{eq.nilsson}.
\end{proposition}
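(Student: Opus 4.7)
The plan is to treat the given linear recursion as a linear difference equation $L a_n = 0$, where $L = \sum_{j=0}^{d} p_j(n) E^j$ with $E$ the shift operator $E a_n = a_{n+1}$ and $p_j \in \BC[n]$, and to compute its formal Nilsson-type solutions by a Frobenius-like procedure. Proposition \ref{prop.unique} will then identify the output of this procedure with the asymptotic expansion of $(a_n)$.

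First I would determine the growth rates. Substituting the ansatz $a_n = \l^n$ and extracting the top-degree-in-$n$ part of $L(\l^n)/\l^n$ yields the \emph{characteristic polynomial} $P(\l) = \sum_j \mathrm{lc}(p_j)\, \l^j$, whose roots are the only candidates for $\l$. For each such root, I would conjugate by $\l^n$ (i.e.\ write $a_n = \l^n b_n$) to obtain a new operator $L_\l$ for which $1$ is an eigenvalue, and then expand $L_\l(n^\a)$ in descending powers of $n$. Equating the leading-order term to zero gives the \emph{indicial polynomial}; its roots modulo $\BZ$ are the admissible exponents $\a$, and the multiplicity of each root modulo $\BZ$ bounds the nilpotency exponent $\b$, exactly as in the Frobenius method for linear ODEs.

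Next, for each admissible triple $(\l, \a, \b)$, I would substitute the ansatz
\begin{equation*}
\l^n n^\a (\log n)^\b \sum_{k \geq 0} c_k n^{-k}
\end{equation*}
into the recursion, using the expansions $(n+j)^\a = n^\a \sum_{m\geq 0}\binom{\a}{m} (j/n)^m$ and $\log(n+j) = \log n + \log(1 + j/n)$ to re-expand everything as a formal series in $1/n$ and $\log n$. Matching coefficients of $n^{-k}(\log n)^\g$ produces a triangular linear system that determines $c_k$ from $c_0, \dots, c_{k-1}$; after normalizing $c_0 = 1$, this yields the power series $g_{\l,\a,\b}(x)$. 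Finally, since $(a_n)$ is of Nilsson type, its expansion \eqref{eq.nilsson} is a $\BC$-linear combination of formal solutions of $L$; Proposition \ref{prop.unique} forces the triples $(\l,\a,\b)$ appearing with nonzero Stokes constants to lie in the list computed above, and the corresponding power series must coincide with the $g_{\l,\a,\b}(x)$ just produced.

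The main obstacle is the bookkeeping around \emph{resonances}: when two roots of the indicial polynomial differ by an integer, the triangular recursion for the $c_k$ becomes singular, and one is forced to introduce the logarithmic corrections predicted by the nilpotency exponent $\b$. The standard Frobenius trick of differentiating the ansatz with respect to $\a$ resolves this and keeps the procedure fully algorithmic; everything else reduces to polynomial manipulations and the solution of triangular linear systems, which can be carried out exactly over any field containing the coefficients of the $p_j$ and the roots of the characteristic and indicial polynomials.
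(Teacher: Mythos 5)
Your proposal is correct and is essentially the paper's own approach: the paper gives no proof, declaring the proposition ``elementary'' and deferring to \cite{FS,WZ2}, and the Frobenius-style computation of formal solutions of the difference operator (characteristic polynomial for the $\l$'s, indicial polynomial for the $\a$'s and $\b$'s, triangular system for the $c_k$'s, with the resonance/logarithm bookkeeping) is precisely the Birkhoff--Trjitzinsky algorithm those references describe and that the paper then illustrates on the $6j$-symbol recursion. The only point worth a line more of care is your final step, that the Nilsson expansion of a solution is itself a formal solution of $L$; this follows by applying $L$ to the expansion and invoking Lemma \ref{lem.1} and Proposition \ref{prop.unique} to kill all coefficients.
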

In particular, a linear recursion relation computes exactly the
asymptotics of a sequence of Nilsson type, up to a finite number of
{\em unknown Stokes constants}.

To apply Proposition \ref{prop.exactc} one needs to find 
a linear recursion for a sequence $(a_n)$. This comes
from the fundamental theorem of Wilf-Zeilberger which states that
a balanced multisum sequence is holonomic, i.e., satisfies a linear
recursion with coefficients polynomials in $n$; see \cite{Z,WZ1,PWZ}.
The proof of the above theorem has been computer implemented
and works well for single sums and reasonably well for double sums;
see \cite{PWZ,PR1,PR2}. As an example, consider the following sequence
from \cite[Sec.10]{GV1}
\begin{eqnarray*}
a_n &=& 
\frac{n!^6}{(3n+1)!^2}
\sum_{k = 3n}^{4n} \frac{(-1)^k(k+1)!}{(k-3n)!^4(4n-k)!^3} 
\end{eqnarray*}
Using the language of \cite{GV1}, $(a_n)$ is the evaluation of the
tetrahedron spin network (also known as $6j$-symbol) when all edges
are equal to $n$.
The command 
$$
\psdraw{math1}{0.5in}
$$
loads the package of \cite{PR2} into {\tt Mathematica}. The command
$$
\psdraw{math2}{5in}
$$
defines the summand of the sequence $(a_n)$, and the command
$$
\psdraw{math3}{2in}
$$
computes the following second order linear recursion relation for the
sequence $(a_n)$
$$
\psdraw{math4}{6in}
$$
This linear recursion has two formal power series solutions of the
form

\begin{eqnarray*}
a_{\pm,n} &=&
\frac{1}{n^{3/2}}\La_{\pm}^n 
\left(1+
\frac{-432 \pm 31 i \sqrt{2}}{576 n}+
\frac{109847 \mp 22320 i \sqrt{2}}{331776 n^2}+
\frac{-18649008 \pm 4914305 i \sqrt{2}}{573308928 n^3} 
\right. \\
& + & \left.
\frac{14721750481 \pm 45578388960 i \sqrt{2}}{660451885056 n^4}+
\frac{-83614134803760 \pm 7532932167923 i \sqrt{2}}{380420285792256 n^5}
\right. \\
& + & \left.
\frac{-31784729861796581 \mp 212040612888146640 i \sqrt{2}}{
657366253849018368 n^6}+
O\left(\frac{1}{n^7}\right)
\right)
\end{eqnarray*}
where
$$
\La_{\pm}=\frac{329 \mp 460 i \sqrt{2}}{729}=e^{\mp i 6 \arccos(1/3)}
$$
are two complex numbers of absolute value $1$. 
The coefficients of the formal power series $a_{\pm,n}$ are in the number
field $K=Q(\sqrt{-2})$.

\subsection{Numerical computations}
\lbl{sub.num}

When a sequence $(a_n)$ is given by a multi-dimensional balanced sum,
the computed implemented WZ method may not terminate. In that case,
one may develop numerical methods for finding $\l, \a,\b$ as in Equation
\eqref{eq.nilsson}. An example of this method is the asymptotics of the
evaluation of the Cube Spin Network that appears in the Appendix of \cite{GV1}.
Effective methods for numerical computations of asymptotics have been 
developed by several authors, and have also been studied by Zagier.

\subsection{Acknowledgment}
The idea of the present paper was conceived during the New York
Conference on {\em Interactions between Hyperbolic Geometry, 
Quantum Topology and Number Theory} in New York in the summer of 2009.
The final writing of the paper occurred in Oberwolfach in the summer of 2010.
The author wishes to thank the organizers of the New York Conference, 
A. Champanerkar, O. Dasbach, E. Kalfagianni, I. Kofman, W. Neumann and 
N. Stoltzfus, and the organizers of the Oberwolfach Conference,
P. Gunnells, W. Neumann, A. Sikora, D. Zagier, for their hospitality. 
In addition, the author wishes to thank E. Croot, D. Zagier, D. Zeilberger
for stimulating conversations and R. van der Veen for a careful
reading of the manuscript.

\ifx\undefined\bysame
        \newcommand{\bysame}{\leavevmode\hbox
to3em{\hrulefill}\,}
\fi

\end{document}